\def\@setauthors{%
  \begingroup
  \def\thanks##1{}%
  \trivlist
  \centering\normalfont\@topsep30\p@\relax
  \advance\@topsep by -\baselineskip
  \item[]\large\authors
  \endtrivlist
  \endgroup
}
\def\@evenhead{\thepage\hfil \scriptsize\shorttitle \hfil}
\def\@oddhead{\hfil \scriptsize\shorttitle \hfil\thepage}
\newtheorem{theorem}{Theorem}
\newtheorem{lemma}[theorem]{Lemma}
\newtheorem{proposition}[theorem]{Proposition}
\newtheorem{corollary}[theorem]{Corollary}
\newenvironment{Remark}{\par\noindent\textbf{Remark.}\ }{\par}
\title[No Tarski Monsters (Exponent Three)]%
{An Elementary Proof of the Nonexistence\\
 of Tarski Monster Groups of Exponent~3}
\author{Hiroshi Arai}
\date{}
\begin{document}

\begin{abstract}It is well known that Tarski monster groups of exponent~3 do not exist. 
Traditional proofs rely on deep structural results, such as the restricted Burnside problem, 
properties of the free Burnside group, or Engel-type identities, 
and involve substantial technical computations. 
In this note we give a fully elementary proof: 
the argument reduces to a single simple identity, 
expressed as a key equation, whose verification requires only a brief calculation.
\end{abstract}

\maketitle

\medskip
The proof of our main result reduces to the verification of the following simple identity,
which we call the key equation.

\section{Key Equation}

\begin{lemma}[Key Equation]
In any group of exponent~$3$, one has
\[
aa^g = a^g a \qquad \text{for all } a,g.
\]
\end{lemma}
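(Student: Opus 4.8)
The plan is to unwind the conjugation and reduce the identity to a two-line manipulation with the defining law $x^3=1$. Writing $a^g=g^{-1}ag$, the assertion $aa^g=a^ga$ becomes
\[
a g^{-1} a g \;=\; g^{-1} a g a .
\]
The point to keep in mind is that the conjugacy is doing real work here: two arbitrary elements of order $3$ in an exponent-$3$ group need not commute (for instance inside $B(2,3)$), so the argument must use that $a^g$ is specifically the $g$-conjugate of $a$, and not merely that it has order dividing $3$. I would exploit this by feeding in the two cube relations that pair $a$ with $g$ and with $g^{-1}$.

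Concretely, I would first record that exponent $3$ gives $x^{-1}=x^2$ for every element, and that $(uv)^3=1$ rewrites as $(uv)^2=(uv)^{-1}$, that is $uvuv=v^{-1}u^{-1}$. Applying this with $u=a,\ v=g^{-1}$ yields $ag^{-1}ag^{-1}=ga^{-1}$, hence $ag^{-1}a=ga^{-1}g$; substituting this central triple into the left-hand side collapses it to $ga^{-1}g^{-1}$. Applying the same rewriting with $u=a,\ v=g$ gives $agag=g^{-1}a^{-1}$, hence $aga=g^{-1}a^{-1}g^{-1}$; substituting this central triple into the right-hand side collapses it to the very same element $ga^{-1}g^{-1}$. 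Since both sides equal $ga^{-1}g^{-1}$, the identity follows.

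The main obstacle is one of discovery rather than of difficulty: one must notice that exactly the two relations $(ag)^3=1$ and $(ag^{-1})^3=1$ suffice, and that once the central triple $ag^{-1}a$ on the left (respectively $aga$ on the right) is rewritten by the corresponding cube relation, each side folds down to the single expression $ga^{-1}g^{-1}$. I expect no commutator calculus, no Engel identity, and no appeal to the structure of $B(2,3)$ to be needed; the only facts used are $a^3=g^3=(ag)^3=(ag^{-1})^3=1$, so the whole verification should occupy just a few lines once the two substitutions are chosen.
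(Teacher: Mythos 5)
Your proof is correct: both substitutions check out, and each side does collapse to $ga^{-1}g^{-1}$. This is essentially the same approach as the paper's, which likewise verifies the identity by a direct few-line computation from the exponent-$3$ law (the paper rewrites $ag^{-1}a\,g=(ga^2)^2ag$ and simplifies in one chain, whereas you meet in the middle via the two cube relations $(ag^{-1})^3=(ag)^3=1$).
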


\begin{proof}
Since $g^{-1}=g^2$, we compute
\[
aa^g = a g^{-1} a g = (g a^2)^{-1} a g = (g a^2)^2 a g.
\]
Expanding the product yields
\[
g a^2 g a^2 a g = g a^2 g^2 = g(ga)^{-1} = g (ga)^2 = gga g a = g^2 a g a = a^g a,
\]
as claimed.
\end{proof}

\begin{Remark}
The key equation also plays a role in Hall’s classic discussion of the free Burnside group of exponent~$3$ 
(denoted $B(3,r)$ in his notation, as opposed to the now-standard $B(r,3)$). 
There the starting point is the relation $(xy)^3=1$, which can be expanded to yield identities such as
\[
yxy = x^{-1} y^{-1} x^{-1}.
\]
By repeated use of these identities one eventually arrives at
\[
x y^{-1} x y = y^{-1} x y x,
\]
which is an instance of the key equation. 
Thus, while the identity does arise in Hall’s treatment, it emerges only after a sequence of substitutions 
within his calculation of the order of $B(r,3)$.
\end{Remark}

\section{Existence of an Infinite Abelian Subgroup}
\begin{proposition}
Every infinite group of exponent~$3$ has an infinite abelian subgroup.
\end{proposition}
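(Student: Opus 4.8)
The plan is to use the Key Equation in its sharpest form. Rewritten as $a a^g = a^g a$, it says precisely that every conjugate $a^g$ commutes with $a$, i.e.\ that the entire conjugacy class of $a$ is contained in the centralizer $C_G(a)$. So the first step is simply to record this reformulation, which converts a commuting statement into a containment statement: the whole conjugacy class sits inside $C_G(a)$.

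From this I would extract the one quantitative fact on which everything rests: in an infinite group $G$ of exponent~$3$, every nontrivial element has an infinite centralizer. Indeed, if $C_G(a)$ were finite, then, since the conjugacy class of $a$ is contained in $C_G(a)$ and has cardinality $[G:C_G(a)]$, we would get $[G:C_G(a)] \le |C_G(a)|$, whence $|G| = [G:C_G(a)]\,|C_G(a)| \le |C_G(a)|^2 < \infty$, contradicting the infinitude of $G$. I expect this to be the conceptual crux: it is the single point where the Key Equation does the real work, and everything afterward is soft. The only subtlety is that it must be applied not only to $G$ but to certain subgroups, which is legitimate because the Key Equation holds in every group of exponent~$3$.

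With this fact in hand I would construct the infinite abelian subgroup by an ascending chain. Set $G_0 = G$, and given that $G_i = C_G(a_1,\dots,a_i)$ (the common centralizer of the chosen elements, which is again an infinite group of exponent~$3$) is infinite, note that $\langle a_1,\dots,a_i\rangle$ is a finitely generated abelian group of exponent~$3$, hence finite of order at most $3^i$. Since $G_i$ is infinite, I may choose $a_{i+1}\in G_i \setminus \langle a_1,\dots,a_i\rangle$; this $a_{i+1}$ is nontrivial and commutes with all earlier elements, and the centralizer fact applied inside $G_i$ makes $G_{i+1}=C_{G_i}(a_{i+1}) = C_G(a_1,\dots,a_{i+1})$ infinite, so the construction never stalls. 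The elements $a_1,a_2,\dots$ then pairwise commute, so $\langle a_1,a_2,\dots\rangle$ is abelian, while the chain $\langle a_1\rangle \subsetneq \langle a_1,a_2\rangle \subsetneq \cdots$ is strictly increasing, forcing its union to be infinite. That union is the desired infinite abelian subgroup; the only routine checks are that each $G_i$ inherits exponent~$3$ and that the finite-versus-infinite cardinality gap really does produce a new generator at every stage.
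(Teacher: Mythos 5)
Your proof is correct, and it takes a genuinely different route from the paper at the one point where the argument could stall. The paper splits into two cases: if some conjugacy class $a^G$ is infinite, then $\langle a^G\rangle$ is an infinite abelian subgroup outright; otherwise all centralizers have finite index, so the common centralizer $H_n$ of the chosen elements has finite index, and either $H_n$ equals the abelian subgroup already built (which is then of finite index, hence infinite) or one can pick a new commuting element. You instead prove a single uniform lemma --- in an infinite group of exponent~$3$ every centralizer is infinite, because $a^G\subseteq C_G(a)$ forces $|G|=[G:C_G(a)]\,|C_G(a)|\le |C_G(a)|^2$ if $C_G(a)$ were finite --- which eliminates the case analysis entirely and is arguably the cleaner conceptual statement; your attention to the fact that the lemma must be reapplied inside each subgroup $G_i$ (legitimate, since subgroups inherit the exponent-$3$ condition) is exactly the right subtlety to flag. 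The trade-off is that to keep choosing new generators you invoke the finiteness of a finitely generated abelian group of exponent~$3$ (order at most $3^i$), a fact the paper's remark explicitly notes it is avoiding in order to keep the argument self-contained; since that fact is itself a one-line count of normal forms $a_1^{e_1}\cdots a_i^{e_i}$, this costs essentially nothing, and in exchange you get a proof with no dichotomy and a reusable intermediate statement about centralizers.
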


\begin{proof}
By the Key Equation, every element commutes with all of its conjugates.

\medskip
\noindent\textit{Case~1.} There exists $a\in G$ whose conjugacy class $a^G$ is infinite.
Then $\langle a^G\rangle$ is abelian; since it is generated by an infinite set, it is infinite.

\medskip
\noindent\textit{Case~2.} All conjugacy classes in $G$ are finite.
Then $|G:C_G(x)|<\infty$ for each $x\in G$, and hence the intersection of finitely many
such centralizers has finite index.

Start with any $x_1\in G$, and set $H_1=C_G(x_1)$.
Inductively, having chosen commuting elements $x_1,\dots,x_{n-1}$, put
\[
H_n=\bigcap_{i=1}^{n-1} C_G(x_i).
\]
Then $H_n$ has finite index in $G$, and every element of $H_n$ commutes with each $x_i$.

If $H_n=\langle x_1,\dots,x_{n-1}\rangle$, then this abelian subgroup has finite index in $G$,
hence necessarily infinite, and we are done. Otherwise, choose
\[
x_n\in H_n\setminus \langle x_1,\dots,x_{n-1}\rangle.
\]
By construction, the $x_i$ are pairwise commuting and distinct. Continuing indefinitely
produces an infinite set of commuting elements, which generates an infinite abelian subgroup.
\end{proof}

\begin{Remark}
Note that the inductive choice in Case~2 is also immediate 
if one uses the standard fact that a finitely generated abelian group of exponent~$3$ is finite. 
More generally, the proposition can also be deduced from the restricted Burnside problem
or from the classification of $2$-Engel groups of exponent~$3$. 
We deliberately avoid all of these inputs here in order to keep the argument completely elementary.
\end{Remark}

\section{Nonexistence of Tarski Monsters for \texorpdfstring{$p=3$}{p=3}}

A \emph{Tarski monster group} for a prime~$p$ is an infinite group in which every proper, nontrivial subgroup has order~$p$. 
The following corollary is an immediate consequence of the preceding proposition:

\begin{corollary}
There is no Tarski monster group of exponent~$3$.
\end{corollary}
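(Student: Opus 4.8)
The plan is to argue by contradiction. Suppose $G$ is a Tarski monster group for $p=3$, that is, an infinite group all of whose proper nontrivial subgroups have order~$3$. The strategy is to first verify that $G$ automatically has exponent~$3$, then invoke the Proposition to extract an infinite abelian subgroup, and finally show that such a subgroup cannot coexist with the defining property of a Tarski monster.

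First I would confirm that $G$ has exponent~$3$, so that the Proposition applies. For any $a\neq 1$, the subgroup $\langle a\rangle$ is a proper nontrivial subgroup: it is nontrivial since $a\neq 1$, and it is proper because otherwise $G=\langle a\rangle$ would be infinite cyclic, but $\mathbb{Z}$ has proper nontrivial subgroups of infinite order, contradicting the Tarski property. Hence $|\langle a\rangle|=3$ and $a^3=1$; as $a$ was arbitrary, $G$ has exponent~$3$.

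Next I would apply the Proposition to obtain an infinite abelian subgroup $A\le G$. Since every proper subgroup of $G$ is finite (of order~$3$), the infinite subgroup $A$ cannot be proper, forcing $A=G$. Thus $G$ itself is an infinite abelian group of exponent~$3$.

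The final step---the only one requiring a moment's care---is to derive a contradiction from this abelian case. Fix $a\neq 1$; since $G$ is infinite while $\langle a\rangle$ is finite, I may choose $b\in G\setminus\langle a\rangle$. In the abelian group $G$, the subgroup $\langle a,b\rangle$ consists of the elements $a^i b^j$ with $0\le i,j\le 2$, so it is finite of order dividing~$9$; since $b\notin\langle a\rangle$, the three cosets of $\langle a\rangle$ inside it are distinct, so its order is exactly~$9$. But then $\langle a,b\rangle$ is a proper subgroup (finite, while $G$ is infinite) of order $9\neq 3$, contradicting the defining property of a Tarski monster. Hence no Tarski monster group of exponent~$3$ exists. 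I expect no genuine obstacle here: the entire subtlety lies in ruling out the possibility $A=G$, which this last computation dispatches without appeal to any deeper structure theory.
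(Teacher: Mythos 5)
Your proof is correct and follows the same route as the paper: apply the Proposition to get an infinite abelian subgroup and contradict the Tarski property. In fact you spell out a step the paper leaves implicit---ruling out the case where the infinite abelian subgroup is all of $G$ by exhibiting the order-$9$ subgroup $\langle a,b\rangle$---and your preliminary verification that a Tarski monster for $p=3$ has exponent~$3$ is harmless, though already assumed in the corollary's statement.
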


\begin{proof}
If $G$ were such a group, then $G$ would be infinite of exponent~$3$, 
but every proper nontrivial subgroup of $G$ would be cyclic of order~$3$. 
By the proposition, however, $G$ must have an infinite abelian subgroup, which is a contradiction.
\end{proof}

\begin{Remark} 
The nonexistence of Tarski monster groups of exponent~$3$ is, of course, already known. 
Classical arguments reach this conclusion by combining the fact with other deep structural results. 
In such approaches the proof itself may be short, but it ultimately relies on heavy prerequisites. 
By contrast, the present proof is entirely elementary: 
both the key identity (the Key Equation) 
and its application to the argument are established by direct and simple calculations. 
\end{Remark}

\section*{Closing Note}
It is well known that Tarski monster groups of exponent~3 do not exist, 
and this fact can be established in several ways. 
Traditional arguments invoke the restricted Burnside problem, 
properties of the free Burnside group~$B(r,3)$, or Engel-type identities. 
While effective, these methods ultimately depend on sophisticated structural results 
and involve substantial technical computations. 
In this note we present a fully elementary proof: 
the argument reduces to a single simple identity, 
expressed as a key equation, whose verification requires only a brief calculation.
This short note was prepared with educational motivation in mind, 
and earlier drafts were prepared under the name ``Sulaxia,'' 
which the author sometimes uses for expository work. 
It is hoped that this elementary presentation may be of interest both to researchers 
and to students encountering this circle of ideas.


\end{document}